\def\Z{\mathbb{Z}}
\def\Q{\mathbb{Q}}
\def\R{\mathbb{R}}
\def\P{\mathbb{P}}
\def\M{\mathcal{M}}
\def\OO{\mathcal{O}}
\def\til#1{\widetilde{#1}}
\def\ovl#1{\overline{#1}}
\def\gp{\mathrm{gp}}
\def\sat{\mathrm{sat}}
\DeclareMathOperator{\Spec}{Spec}
\DeclareMathOperator{\Proj}{Proj}
\DeclareMathOperator{\Hom}{Hom}
\DeclareMathOperator{\LSp}{S}
\DeclareMathOperator{\BSp}{Bl}
\def\dl{\langle\kern-.22em\langle}
\def\dr{\rangle\kern-.22em\rangle}
\def\dal{\langle\kern-.22em\langle}
\def\dar{\rangle\kern-.22em\rangle}
\def\dbl{[\kern-.12em[}
\def\dbr{]\kern-.12em]}
\def\dpl{(\kern-.2em(}
\def\dpr{)\kern-.2em)}
\def\dast{\ast\kern-.03em\ast}
\def\tast{\ast\kern-.26em\ast\kern-.26em\ast}
\title{Integral morphisms and log blow-ups}
\author{Fumiharu Kato}
\subjclass[2010]{Primary: 14A99, Secondary: 14E05}
\begin{document}
\theoremstyle{plain}
\newtheorem{thm}[subsection]{Theorem}
\newtheorem{prop}[subsection]{Proposition}
\newtheorem{lem}[subsection]{Lemma}
\newtheorem{cor}[subsection]{Corollary}
\newtheorem{probs}[subsection]{Problems}
\newtheorem{cla}{Claim}[subsection]
\theoremstyle{definition}
\newtheorem{dfn}[subsection]{Definition}
\newtheorem{ntn}[subsection]{Notation}
\newtheorem{exa}[subsection]{Example}
\newtheorem{exas}[subsection]{Examples}
\newtheorem{assum}[subsection]{Assumption}
\newtheorem{sit}[subsection]{Situation}
\theoremstyle{remark}
\newtheorem{rem}[subsection]{Remark}
\newtheorem{note}[subsection]{Note}

\begin{abstract}
This paper is a revision of the author's old preprint ``Exactness, integrality, and log modifications''. We will prove that any quasi-compact morphism of fs log schemes can be modified locally on the base to an integral morphism by base change by fs log blow-ups.
\end{abstract}

\maketitle

\setcounter{section}{0}
\section{Introduction}\label{sec-intro}
The aim of this paper is to prove the following theorem:
\begin{thm}\label{thm-main}
Let $f\colon X\rightarrow Y$ be a quasi-compact morphism of fs $(=$ fine and saturated$)$ log schemes.
Then for any $y\in Y$ there exists an \'etale neighborhood $V\rightarrow Y$ of $\ovl{y}$ $(=$ a separable closure of $y)$ and an fs log blow-up $V':=\BSp_{\mathcal{K}}(V)\rightarrow V$ along a coherent log ideal $\mathcal{K}\subset\M_V$, by which the fs base change $f_{V'}\colon X_{V'}\rightarrow V'$ is integral.
\end{thm}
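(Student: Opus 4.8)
The strategy is to reduce the theorem to a purely combinatorial statement about subdivisions of the monoid chart of $Y$, and then to settle that statement by a flattening argument carried out on cones.

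\emph{Reduction to charts.} Integrality of a morphism of fs log schemes is \'etale-local on source and target and can be tested on the maps of characteristic monoids $\ovl\M_{Y,f(x)}\to\ovl\M_{X,x}$. Since fs log schemes carry fs charts \'etale-locally, I would first replace $Y$ by an \'etale neighbourhood $V$ of $\ovl y$ carrying a chart $Q:=\ovl\M_{Y,\ovl y}\to\M_V$; since $f$, hence $X_V\to V$, is quasi-compact, $X_V$ is covered by finitely many affine opens $U_i$ on which, after shrinking $V$, one has fs charts $P_i\to\M_{U_i}$ together with chart morphisms $h_i\colon Q\to P_i$ for $f|_{U_i}$. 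An fs log blow-up of $V$ along a coherent log ideal supported at $\ovl y$ corresponds to a finite rational subdivision $\Sigma$ of the strongly convex rational polyhedral cone $\sigma_Q:=\Hom(Q,\R_{\geq 0})$, and the fs base change to the chart $Q_\sigma$ of a cone $\sigma\in\Sigma$ replaces $h_i$ by $Q_\sigma\to(P_i\oplus_Q Q_\sigma)^{\sat}$. As there are only finitely many $h_i$, it suffices to treat one morphism $h\colon Q\to P$ of sharp fs monoids at a time and pass to a common refinement of the resulting subdivisions. So the theorem reduces to: \emph{for every morphism $h\colon Q\to P$ of sharp fs monoids there is a finite rational subdivision $\Sigma$ of $\sigma_Q$ such that $Q_\sigma\to(P\oplus_Q Q_\sigma)^{\sat}$ is integral for every $\sigma\in\Sigma$.}

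\emph{Translation into convex geometry.} Dualizing, $h$ induces a morphism of cones $\pi\colon\sigma_P\to\sigma_Q$, with $\sigma_P:=\Hom(P,\R_{\geq 0})$, which restricts to a surjection onto its image cone $\tau:=\pi(\sigma_P)$. Using the standard criterion for integrality of monoid morphisms (Kato), one checks that, for a subdivision $\Sigma$ of $\sigma_Q$ and $\sigma\in\Sigma$, the morphism $Q_\sigma\to(P\oplus_Q Q_\sigma)^{\sat}$ is integral exactly when the restriction $\pi^{-1}(\sigma)\to\sigma$ is, after localizing along the face $\sigma\cap\tau$, \emph{equidimensional}: all its nonempty fibres have dimension $\dim\sigma_P-\dim\tau$. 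This is the combinatorial shadow of the equivalence ``$h$ injective and integral $\Leftrightarrow$ $\Z[h]$ flat'', together with the reduction of an arbitrary $h$ to the injective, strictly dominant case by factoring through the image submonoid and a localization.

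\emph{Combinatorial flattening.} It then remains to subdivide $\sigma_Q$ so that $\pi$ becomes equidimensional over every cone of the subdivision. The locus where the fibre dimension exceeds $\dim\sigma_P-\dim\tau$ is a closed polyhedral subset of $\sigma_Q$ of positive codimension. I would argue by induction on $\dim\sigma_Q$ (equivalently on the maximal fibre dimension): first pass to the coarsest rational subdivision of $\sigma_Q$ refining the polyhedral decomposition obtained from the images under $\pi$ of all faces of $\sigma_P$ — equivalently, the common refinement of the ``Newton'' subdivisions attached to the faces of $P$ — so that over each resulting cone the preimage in $\sigma_P$ is a subcomplex whose projection has locally constant fibre dimension away from the boundary; then refine over the lower-dimensional strata by the inductive hypothesis. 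Alternatively this is a toric incarnation of generic flatness followed by flattening by blow-ups (Raynaud--Gruson), transported through the dictionary between fs log blow-ups and subdivisions of $\sigma_Q$.

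\emph{Main obstacle.} The crux is this combinatorial flattening, and within it the delicate points are: (i) one is allowed to subdivide only the \emph{target} cone $\sigma_Q$, while $\sigma_P$ is rigid, so the preimages $\pi^{-1}(\sigma)$ must be handled as they come and cannot be re-subdivided; (ii) ``equidimensional fibres'' must be upgraded to genuine \emph{integrality} of the saturated pushout morphism, which forces one to keep track of the localization along $\sigma\cap\tau$ and of the interaction of saturation with the chosen subdivision; and (iii) the reduction of a general $h$ — with $h^{\gp}$ neither injective nor of torsion-free cokernel — to the injective dominant case. By contrast, the globalization step (descending the finitely many local log ideals to a single coherent $\mathcal K\subset\M_V$ over a common \'etale neighbourhood, and independence of the chosen charts) is routine once the combinatorial input is in hand.
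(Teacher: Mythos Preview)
Your overall architecture---reduce to finitely many chart homomorphisms $h_i\colon Q\to P_i$, then subdivide $\sigma_Q$ to flatten---is exactly the paper's. But there is a genuine gap in your ``Translation into convex geometry'' step: the asserted equivalence between integrality of $Q_\sigma\to(P\oplus_Q Q_\sigma)^{\sat}$ and equidimensionality of $\pi^{-1}(\sigma)\to\sigma$ is \emph{false}. Take $Q$ to be the $A_1$-monoid $\langle a,b,c\mid ac=b^2\rangle$ and $P=\N^2$ with $a\mapsto 2e_1$, $b\mapsto e_1+e_2$, $c\mapsto 2e_2$; the dual cone map is an isomorphism of cones (hence trivially equidimensional), yet $\Z[Q]\to\Z[P]$ is the normalization of a node, which is not flat, so $h$ is not integral. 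Equidimensionality of a toric morphism implies flatness only when the target is regular; this is precisely ``miracle flatness'' (Cohen--Macaulay source, regular target, equidimensional fibres $\Rightarrow$ flat), and it fails over singular $Q_\sigma$.

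The paper closes this gap by inserting an extra step you omit: after the flattening blow-up (which it constructs explicitly via Kajiwara's support-function trick, summing $\OO(-1)$-type functions attached to the rays of $\sigma_P$, rather than by your inductive/Raynaud--Gruson sketch), it performs a \emph{further} fs log blow-up to make the resulting fan of $\BSp_K(Q)$ smooth over $\Z$, invoking Nizio\l\ to know that the composite is still a single fs log blow-up. Only then does equidimensional $+$ Cohen--Macaulay $+$ smooth target give flatness of each affine toric patch $\LSp(P'_i)\to\LSp(Q')$, and hence integrality by Lemma~\ref{lem-im}(6). You flag ``upgrading equidimensional to integral'' as obstacle~(ii), but you do not supply the mechanism; the smoothing of the base fan is that mechanism, and without it the argument does not close.
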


Here, by {\em fs log blow-up} (resp.\ {\em fs base change}) we mean a log blow-up (resp.\ base change) in the category of fs log schemes; cf.\ Remark {\rm \ref{rem-normalization}}.

This theorem has been announced and proved in somewhat incomplete and inaccurate form in the author's old preprint \cite{FK}, a first draft of which has actually been written in 1997, and afterwards put in the arXiv in 1999.
Since then, mainly because the author has been away from log geometry, the paper has been kept unpublished; the author apologizes for all inconvenience caused thereby.
While there have been much progress and many new results in log geometry last two decades, the paper has sometimes been referred to.
Moreover, it seems, to the best of the author's knowledge, that the theorem itself has not yet been written anywhere, even in the foundational book \cite{AG2} by Ogus, and became folklore among experts.

In fact, the theorem is nowadays a consequence of combination of known results.
For example, Luc Illusie, Kazuya Kato, and Chikara Nakayama proved in \cite{IKN} (see also \cite[III.2.6.7]{AG2}) a weak version of the theorem, where ``integral'' is replaced by ``$\Q$-integral''.
Then by a further fs log blow-up of the base to make the log structure free (i.e., to make each stalk of $\ovl{\M}=\M/\OO^{\times}$ a free monoid), the resulting map becomes integral (cf.\ \cite[I.4.7.5]{AG2}).
Since, due to Nizio\l\ \cite[4.11]{Niz}, the composition of fs log blow-ups is again an fs log blow-up, this actually suffices to prove the theorem.

In the mean time, in August 2020, Michael Temkin asked the author some questions on the preprint, and suggested the final form of the theorem presented as above.
Based upon the fact that the theorem has to be referred to in a recent work \cite{ATW} of him and his coauthors, and that the theorem has not yet been presented in published form, he encouraged the author to revise the old preprint for publishing.
This is the situation from whence the present paper comes out, where we keep the original proof based on the technique of {\em toric flattening}, the original idea of which is attributed to Takeshi Kajiwara.

Let us mention some consequences of Theorem \ref{thm-main}.
Tsuji \cite[II.3.4]{TT} proved that any integral and quasi-compact morphism between fs log schemes can be made {\em saturated} by fs base change by ``multiplication-by-$n$'' map. 
Combined with our result, this yields the following:
\begin{cor}\label{cor-main1}
Let $f\colon X\rightarrow Y$ be a quasi-compact morphism between fs log schemes.
Then for any $y\in Y$ there exists a diagram
$$
\xymatrix{V''\ar[r]\ar[d]&V'\ar@{^{(}->}[r]\ar[d]&\BSp_K(V)\ar[d]\ar[r]&V\ar[d]\ar[r]&Y\\ \LSp(Q')\ar[r]_{\mu}&\LSp(Q')\ar@{^{(}->}[r]&\BSp_K(Q)\ar[r]_{\pi}&\LSp(Q)},
$$
with all vertical arrows strict and all squares cartesian in the category of fs log schemes, such that 
\begin{itemize}
\item $V\rightarrow Y$ is an \'etale neighborhood of $\ovl{y}$;
\item $\pi$ is an fs log blow-up of $\LSp(Q)=\Spec\Z[Q]$ $($with the log structure by $Q\rightarrow\Z[Q]$$)$ along an ideal $K\subset Q$;
\item $\LSp(Q')\hookrightarrow\BSp_K(Q)$ is an arbitrary affine patch of $\BSp_K(Q)$;
\item $\mu$ is the morphism induced from the multiplication-by-$n$ homomorphism $Q'\rightarrow Q'$ for some positive integrer $n$,
\end{itemize}
and that the fs base change $f_{V''}\colon X\times_YV''\rightarrow V''$ is saturated.
\end{cor}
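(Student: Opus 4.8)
The plan is to combine Theorem~\ref{thm-main} with Tsuji's saturation theorem, the only real work being to exhibit the log blow-up produced by Theorem~\ref{thm-main} as the strict base change of a log blow-up of $\LSp(Q)=\Spec\Z[Q]$. First I would apply Theorem~\ref{thm-main} to get an \'etale neighborhood $V\rightarrow Y$ of $\ovl y$ and an fs log blow-up $\BSp_{\mathcal{K}}(V)\rightarrow V$ along a coherent log ideal $\mathcal{K}\subset\M_V$ such that $f_{\BSp_{\mathcal{K}}(V)}$ is integral. Shrinking $V$ around a point over $y$, I may assume $V$ is quasi-compact and that its log structure admits a chart $\beta\colon Q\rightarrow\M_V$ by a fine saturated monoid $Q$ which is neat at $\ovl y$, i.e.\ with $Q\xrightarrow{\ \sim\ }\ovl{\M}_{V,\ovl y}$; then $V\rightarrow\LSp(Q)$ is strict. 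Since $\mathcal{K}$ is coherent it is, near $\ovl y$, generated by finitely many local sections $m_1,\dots,m_r$ of $\M_V$, and neatness of the chart lets us write, after a further shrinking, $m_i=u_i\beta(q_i)$ with $u_i\in\OO_V^{\times}$ and $q_i\in Q$; as a unit multiple generates the same log ideal, $\mathcal{K}=K\M_V$ for the monoid ideal $K=\ZR{q_1,\dots,q_r}\subset Q$. Because the formation of fs log blow-ups commutes with strict base change taken in the category of fs log schemes, the blow-up becomes $\BSp_K(V):=\BSp_{K\M_V}(V)=V\times_{\LSp(Q)}\BSp_K(Q)$ with strict first projection, and $f_{\BSp_K(V)}$ is still integral, since integrality is local on the base and is inherited from $f_{\BSp_{\mathcal{K}}(V)}$.

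Next, fix an \emph{arbitrary} affine patch $\LSp(Q')\hookrightarrow\BSp_K(Q)$ and let $V':=\BSp_K(V)\times_{\BSp_K(Q)}\LSp(Q')$ be the corresponding affine patch of $\BSp_K(V)$, an open subscheme. The map $V'\rightarrow\LSp(Q')$ is strict, being a base change of the strict map $\BSp_K(V)\rightarrow\BSp_K(Q)$, and $V'\rightarrow Y$ is quasi-compact. Restricting $f_{\BSp_K(V)}$ to the open $V'$ identifies $f_{V'}\colon X\times_YV'\rightarrow V'$ with the fs base change of $f$ along $V'\rightarrow Y$, and this morphism is integral and quasi-compact.

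Finally I would invoke Tsuji's theorem \cite[II.3.4]{TT}: applied to the integral quasi-compact morphism $f_{V'}$, it yields a positive integer $n$ such that the fs base change of $f_{V'}$ along the ``multiplication-by-$n$'' map of $V'$ is saturated. Via the strict chart $V'\rightarrow\LSp(Q')$, that base change is exactly $V''=V'\times_{\LSp(Q'),\mu}\LSp(Q')$, where $\mu\colon\LSp(Q')\rightarrow\LSp(Q')$ is induced by multiplication-by-$n$ on $Q'$; hence $f_{V''}$ is saturated. By construction every square of the displayed diagram is cartesian in the category of fs log schemes and every vertical arrow is strict, so the diagram has all the asserted properties. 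The one genuinely delicate step is the reduction in the first paragraph — producing a chart $Q$ through which $\mathcal{K}$, and hence its log blow-up, factors by strict base change from $\BSp_K(Q)$; the remaining points (stability of integrality, strictness and quasi-compactness under the base changes involved, and the identification of Tsuji's multiplication-by-$n$ base change with $\mu$ once the base carries the strict chart $V'\rightarrow\LSp(Q')$) are routine.
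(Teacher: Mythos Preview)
Your argument is correct and follows the same overall strategy the paper has in mind: combine Theorem~\ref{thm-main} with Tsuji's saturation theorem \cite[II.3.4]{TT}. The only difference is in how you obtain the ideal $K\subset Q$. You treat Theorem~\ref{thm-main} as a black box, then invoke coherence of $\mathcal{K}$ together with a neat chart (essentially Remark~\ref{rem-cli}) to descend $\mathcal{K}$ to a monoid ideal $K$; this is the step you flag as ``delicate,'' and it is handled correctly. The paper, by contrast, simply inspects the proof of Theorem~\ref{thm-main}: there the \'etale neighborhood $V$ already carries a strict chart $V\rightarrow\LSp(Q)$ with $Q=\ovl{\M}_{Y,\ovl{y}}$, and the log blow-up is constructed from the outset as $\BSp_K(V)=V\times_{\LSp(Q)}\BSp_K(Q)$ for an explicit $K\subset Q$, so no separate descent step is needed. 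Your route has the virtue of using only the statement of Theorem~\ref{thm-main}; the paper's is shorter because the diagram in the corollary is already produced by the proof of the theorem.
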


Our second application is to log flatness.
\begin{cor}\label{cor-main2}
Let $f\colon X\rightarrow Y$ be a log flat and finitely presented morphism between fs log schemes.
Then for any $y\in Y$ there exists an \'etale neoghborhood $V\rightarrow Y$ of $\ovl{y}$ and an fs log blow-up $V'\rightarrow V$ such that the underlying morphism of the fs base change $f_{V'}\colon X\times_YV'\rightarrow V'$ is flat.
\end{cor}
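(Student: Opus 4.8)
The plan is to reduce Corollary \ref{cor-main2} to Theorem \ref{thm-main} together with the known fact that a log flat and \emph{integral} morphism of fs log schemes has flat underlying morphism. Since $f$ is finitely presented it is in particular quasi-compact, so Theorem \ref{thm-main} provides an \'etale neighborhood $V\rightarrow Y$ of $\ovl{y}$ and an fs log blow-up $V'=\BSp_{\mathcal{K}}(V)\rightarrow V$ such that the fs base change $f_{V'}\colon X\times_YV'\rightarrow V'$ is integral. It then remains only to show that the underlying morphism of $f_{V'}$ is flat.

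First I would check that $f_{V'}$ is again log flat. The morphism $V\rightarrow Y$ is strict \'etale and the fs log blow-up $V'\rightarrow V$ is log \'etale, hence both are log flat, and so is the composite $V'\rightarrow Y$. Since $f_{V'}$ is the fs base change of the log flat morphism $f$ along $V'\rightarrow Y$, invoking the stability of log flatness of morphisms of fs log schemes under fs base change shows that $f_{V'}$ is log flat; it is moreover finitely presented, being the fs base change of the finitely presented $f$ by the finitely presented morphism $\BSp_{\mathcal{K}}(V)\rightarrow V$.

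Then $f_{V'}$ is a finitely presented morphism of fs log schemes that is both log flat and integral, and I would conclude as follows. Working \'etale-locally on charts, integrality lets one choose a chart $\theta\colon Q\rightarrow P$ of $f_{V'}$ with $\theta$ injective and integral, while log flatness lets one arrange, after refining, that $X\times_YV'\rightarrow V'\times_{\Spec\Z[Q]}\Spec\Z[P]$ is flat on underlying schemes; since an injective integral homomorphism of integral monoids induces a flat ring homomorphism $\Z[Q]\rightarrow\Z[P]$ by a theorem of K.\ Kato (cf.\ also \cite{AG2}), the projection $V'\times_{\Spec\Z[Q]}\Spec\Z[P]\rightarrow V'$ is flat, and composing with the previous map shows that the underlying morphism of $f_{V'}$ is flat, as desired.

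Since Theorem \ref{thm-main} does all the substantive work, the proof of the corollary is short, and the one point that requires genuine care is the passage in the second step: fs base change involves a saturation, and saturation is not flat on underlying schemes in general, so one cannot argue naively from ordinary flatness. The main (minor) obstacle is therefore to cite, or to verify from the theory of the log flat topology, the stability of log flatness under fs base change; granting that, together with the well-known local structure and flatness statements for integral morphisms, the corollary follows formally.
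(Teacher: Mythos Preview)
Your approach is essentially the same as the paper's: apply Theorem~\ref{thm-main} to reduce to the case where $f$ is integral, note that log flatness and finite presentation survive the fs base change, and then argue chartwise that integral\,$+$\,log flat implies flat. One small imprecision: the implication ``integral $\Rightarrow$ $\Z[Q]\to\Z[P]$ flat'' requires $Q$ sharp and $\theta$ local (Lemma~\ref{lem-im}\,(6)), not merely $\theta$ injective; the paper handles this by taking a neat chart as in Lemma~\ref{lem-goodcharts}, which automatically gives $Q\cong\ovl{\M}_{Y,\ovl{y}}$ sharp and $\theta$ local, and then uses Lemma~\ref{lem-im}\,(5) to pass integrality from $\ovl{\theta}$ to $\theta$.
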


Indeed, we may assume that $f$ is integral, log flat, and of finite presentation.
We may further assume that $f$ has a global chart by $h\colon Q\rightarrow P$, which is ``neat'' at a point $x\in X$ (as in Lemma \ref{lem-goodcharts} below) such that $Q\cong\ovl{\M}_{Y,\ovl{y}}$ ($y=f(x)$) and $\ovl{P}\cong\ovl{\M}_{X,\ovl{x}}$.
Then, since $\ovl{h}\colon \ovl{Q}=Q\rightarrow\ovl{P}$ is integral, so is $h$ (cf.\ Lemma \ref{lem-im} (5)).
Since $Q$ is sharp (i.e., $Q^{\times}=\{1\}$) and $h$ is local (i.e., $h^{-1}(P^{\times})=Q^{\times}$), the ring homomorphism $\Z[P]\rightarrow\Z[Q]$ is flat (cf.\ Lemma \ref{lem-im} (6)).
Since the log flatness implies that $X$ is flat over $Y\otimes_{\Z[Q]}\Z[P]$ (\cite[4.15]{Ols}), we deduce that $f$ itself is flat.

As for the interaction between log flatness and usual flatness, much has been studied recently by some authors. 
Among them, we refer to a preprint by Gillam \cite{Gil}.
It seems that, combining our result with many of the results therein, we can deduce several useful consequences.

Finally, let us remark that, if $Y$ in Theorem \ref{thm-main} is log regular, then $V$ can be equal to $Y$ itself, i.e., one can find an fs log blow-up of $Y$ itself that makes the morphism $f$ integral by fs base change.
This version of the theorem has been proven independently in \cite[3.6.11]{ATW}, which we will include, with a few comments, at the end of this paper (Proposition \ref{prop-ATW}).

\begin{rem}\label{rem-main}
The original paper \cite{FK} included an ``exactness version'' of the theorem, where ``integral'' is replaced by ``exact'', which we do not include in the present paper, since it follows immediately from the result in \cite{IKN} mentioned above.
\end{rem}

The composition of this paper is as follows.
In the next section, we will collect some basics of integral morphisms and neat charts.
In Section 3, we will overview log blow-ups.
We will then prove the main theorem in Section 4.

The original version of the paper owes much to Richard Pink, Takeshi Kajiwara, and Max Planck Institute f\"ur Mathematik in Bonn, Germany.
In addition, the preparation of the present version owes much to Michael Temkin for his encouragement, and to Chikara Nakayama for valuable discussions.

\subsection{Notation and conventions}\label{subsub-notation}
All rings and monoids are assumed to be commutative.
For a monoid $M$, we denote by $M^{\times}$ and $M^{\gp}$ the subgroup of invertible elements and the associated group, respectively, and write $\ovl{M}=M/M^{\times}$.

All sheaves on schemes are considered with respect to the \'etale topology. 
For a point $x$ of a scheme, we denote by $\ovl{x}$ a separable closure of $x$.
For a log scheme $X$, we denote by $\alpha_X\colon\M_X\rightarrow\OO_X$ the log structure of $X$, and write $\ovl{\M}_X=\M_X/\OO^{\times}_X$.
We denote by $\underline{X}$ the underlying scheme of $X$, which is also considered as a log scheme with the trivial log structure.

For a monoid $P$, we denote by $\LSp(P)$ the log scheme whose underlying scheme is the affine scheme $\Spec \Z[P]$ with the log structure induced from $P\rightarrow \Z[P]$.
($\LSp(P)$ is denoted by $\mathsf{A}_P$ in \cite{AG2}.)

\section{Integral homomorphisms}\label{sec-emim}
Let us first recall the definition of integral homomorphisms.

\begin{dfn}\label{dfn-emdm}
A homomorphism $h\colon Q\rightarrow P$ of integral monoids is said to be {\em integral} if, for any integral monoid $Q'$ and any homomorphism $Q\rightarrow Q'$, the push-out $P\oplus_QQ'$ in the category of monoids is an integral monoid.
\end{dfn}

It can be shown (\cite[(4.1)]{KK1}\cite[I.4.6.2]{AG2}) that $h\colon Q\rightarrow P$ is integral if and only if it has the following property: if $h(a_1)b_1=h(a_2)b_2$ for $a_1,a_2\in Q$ and $b_1 ,b_2\in P$, there exists $a_3,a_4\in Q$ and $b\in P$ such that $b_1=h(a_3)b$, $b_2=h(a_4)b$ and $a_1a_3=a_2a_4$.

\begin{lem}\label{lem-im}
$(1)$ The composition of integral homomorphisms is integral.
For a diagram $Q\stackrel{h}{\rightarrow}P\stackrel{k}{\rightarrow}R$ of integral monoids, if $k\circ h$ is integral and $k$ is exact, then $h$ is integral; if $ k\circ h$ is integral and $h$ is surjective, then $k$ is integral. 

$(2)$ The pushout of an integral homomorphism in the category of monoids is integral. 

$(3)$ If $P$ is an integral monoid, and $N\subset P$ is a submonoid, then the canonical map $P\rightarrow P/N$ is integral.

$(4)$ An integral and local homomorphism of integral monoids is exact.

$(5)$ A homomorphism $h\colon Q\rightarrow P$ of integral monoids is integral if and only if $\ovl{h}\colon\ovl{Q}\rightarrow\ovl{P}$ is integral.

$(6)$ A homomorphism $h\colon Q\rightarrow P$ of integral monoids is integral if the homomorphism of monoid algebras $\Z[Q]\rightarrow\Z[P]$ is flat.
The converse is true, if $h$ is local and $Q$ is sharp.
\end{lem}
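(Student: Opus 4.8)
The six parts of Lemma~\ref{lem-im} are the standard toolbox for integral morphisms, so I would prove them by reducing to the combinatorial characterization recalled just above: $h\colon Q\to P$ is integral iff whenever $h(a_1)b_1=h(a_2)b_2$ there are $a_3,a_4\in Q$, $b\in P$ with $b_1=h(a_3)b$, $b_2=h(a_4)b$, $a_1a_3=a_2a_4$. For (1), composition is a diagram chase: given $(k\circ h)(a_1)c_1=(k\circ h)(a_2)c_2$ in $R$, apply integrality of $k$ to $k(h(a_1))c_1 = k(h(a_2))c_2$ to split $c_i = k(b_i)c$ with $h(a_1)b_1=h(a_2)b_2$ in $P$, then apply integrality of $h$ and reassemble. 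The two cancellation statements ($k$ exact $\Rightarrow$ $h$ integral; $h$ surjective $\Rightarrow$ $k$ integral) are similar chases where exactness of $k$ is used to pull an identity in $R$ back to $P$, and surjectivity of $h$ is used to realize elements of $Q$-data as $h$-images. For (2), the pushout statement, I would note that integrality is characterized by a universal property (Definition~\ref{dfn-emdm}) and that pushout along $Q\to Q'$ of $h$ is computed by base change; given $h$ integral and $Q'\to Q''$ arbitrary, $(P\oplus_Q Q')\oplus_{Q'}Q'' = P\oplus_Q Q''$ is integral since $Q\to Q''$ factors through $Q'$ — so this is essentially formal from the definition, modulo checking the pushouts are taken in the right category.

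**Parts (3)--(5).** Part (3) is a direct verification of the combinatorial criterion for $P\to P/N$: the relation in $P/N$ lifts to $\bar p_1 \bar n_1 = \bar p_2 \bar n_2$ in $P$ with $p_i$ mapping to the given $b_i$ and $n_i\in N$, i.e.\ the required $a_3,a_4$ are exactly $n_2, n_1$ (up to reindexing) and $b$ can be taken to be a common element witnessing the equality — here one must be careful with integrality of $P$ to do the cancellations cleanly. Part (4): if $h$ is integral and local, to show $h$ is exact I must show $a\in Q^{\gp}$ with $h(a)\in P$ lies in $Q$; writing $a = q_1 q_2^{-1}$ with $q_i\in Q$, the hypothesis says $h(q_1) = h(q_2)\cdot h(a)$ with $h(a)\in P$, feed $h(q_1)\cdot 1 = h(q_2)\cdot h(a)$ into the criterion to get $1 = h(a_3)b$, $h(a) = h(a_4)b$, $q_1 a_3 = q_2 a_4$; then $h(a_3)$ is invertible in $P$, so by locality $a_3\in Q^\times$, whence $a = q_1 q_2^{-1} = a_4 a_3^{-1}\in Q$. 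Part (5) reduces integrality of $h$ to that of $\bar h$: the criterion only involves equations, and passing from $P$ to $\bar P = P/P^\times$ (likewise $Q\to\bar Q$) changes equalities by units, which can be absorbed into the $b$'s and $a_i$'s; conversely a solution of the criterion downstairs lifts, again absorbing units, using that $Q\to\bar Q$, $P\to\bar P$ are surjective with group kernels. One direction of (5) is also immediate from (3) applied to $P\to\bar P$ together with (1).

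**Part (6) and the main obstacle.** The first assertion of (6) — $\Z[Q]\to\Z[P]$ flat $\Rightarrow$ $h$ integral — I would get from the ideal-theoretic criterion for flatness (equational criterion): a relation $h(a_1)b_1 = h(a_2)b_2$ in $P$ gives a relation $x_{a_1} y_{b_1} - x_{a_2} y_{b_2}$ among generators in $\Z[P]$ over $\Z[Q]$, i.e.\ an element of the kernel of $\Z[Q]^2 \to \Z[P]$, $(f,g)\mapsto f\,h(b_1)'?$ — more precisely, flatness lets one write the syzygy as a $\Z[Q]$-combination of syzygies with $\Z[P]$-coefficients, and unwinding which monomials can appear forces exactly the monoid relation $a_1 a_3 = a_2 a_4$, $b_1 = h(a_3)b$, $b_2 = h(a_4)b$. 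The converse — $h$ integral, local, $Q$ sharp $\Rightarrow$ $\Z[Q]\to\Z[P]$ flat — is the serious point and is a theorem of Kato; I would cite \cite[(4.1)]{KK1} (or \cite[I.4.8.x]{AG2}) rather than reprove it, since the proof goes through a careful analysis showing $P$ is a filtered union, or a direct limit, of free $Q$-sets when $h$ is integral and local with $Q$ sharp, from which flatness of the monoid algebras is immediate. \textbf{The hard part} is therefore not (1)--(5), which are bookkeeping with the combinatorial criterion, but (6): the forward implication needs the flatness-equational-criterion translation done cleanly, and for the converse I would lean on the cited structure theorem rather than attempt a self-contained argument.
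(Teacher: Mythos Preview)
Your overall strategy is sound and your direct arguments for (1), (2), and (4) are correct; the paper, by contrast, does not prove (1), (4), or (6) at all but simply cites \cite[I.4.6.3 \& I.4.6.7]{AG2}, proves only (3) by hand, notes (2) is immediate from the definition, and then derives (5) formally from (1), (3), and the observation that $P\to\ovl{P}$ is always exact. So your write-up is more self-contained than the paper's, and the content of your sketches for (1) and (4) is essentially what one finds in the cited references.

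Two specific corrections. In your sketch of (3), the choice ``$a_3,a_4$ are exactly $n_2,n_1$'' does not work: since $n_i\in N$ one would get $\pi(a_3)=\pi(a_4)=\bar 1$, forcing $b_1=b_2$. The paper's choice is the right one: lifting $\pi(a_1)b_1=\pi(a_2)b_2$ to $a_1p_1c_1=a_2p_2c_2$ in $P$ with $c_i\in N$ and $p_i\mapsto b_i$, set $a_3=p_1c_1$, $a_4=p_2c_2$, and $b=\bar 1$; then $\pi(a_3)=b_1$, $\pi(a_4)=b_2$, and $a_1a_3=a_2a_4$ is the lifted equality itself. For (5), your unit-absorption argument can be replaced entirely by the formal route the paper uses: $Q\to\ovl{Q}$ and $P\to\ovl{P}$ are integral by (3), the former is surjective and the latter is exact, so both implications of (5) drop out of the two cancellation statements in (1). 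This is cleaner and avoids any bookkeeping with lifts of units.
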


Recall that a homomorphism $h\colon Q\rightarrow P$ of integral monoids said to be {\em exact} if $(h^{\gp})^{-1}(P)=Q$, where $h^{\gp}\colon Q^{\gp}\rightarrow P^{\gp}$ is the associated group homomorphism.
Recall also that, for a monoid $P$ and a submonoid $N\subset P$, the quotient monoid $P/N$ is given by $P/\sim$ (endowed with the natural monoid structure), where $a\sim b$ if and only if $ac=bd$ for some $c,d\in N$.

\begin{proof}
For (1), (4), and (6), see \cite[I.4.6.3 \& I.4.6.7]{AG2}.
(2) is immediate from the definition of integral homomorphisms.
(5) follows from (1), (3), and the fact that a homomorphism of integral monoids of the form $Q\rightarrow\ovl{Q}$ is always exact.
Hence it suffices to show (3).
To show that $\pi\colon P\rightarrow P/N$ ($a\mapsto\ovl{a}$)  is integral, take $a_1,a_2, b_1,b_2\in P$ such that $\ovl{a_1}\ovl{b_1}=\ovl{a_2}\ovl{b_2}$; the last equality means $a_1b_1c_1=a_2b_2c_2$ for $c_1,c_2\in N$, and if we set $a_3=b_1c_1$, $a_4=b_2c_2$ and $b=\ovl{1}$, then we have $a_1a_3=a_2a_4$, $\ovl{b_1}=\ovl{a_3}b$ and $\ovl{b_2}=\ovl{a_4}b$, which shows that $\pi$ is integral.
\end{proof}

\begin{dfn}\label{emdm2}
A morphism $f\colon X\rightarrow Y$ of integral log schemes is said to be {\em integral} {\em at $x\in X$} if the monoid homomorphism $\ovl{\M}_{Y,\ovl{y}}\rightarrow\ovl{\M}_{X,\ovl{x}}$, where $y=f(x)$, is integral, or equivalently, $\M_{Y,\ovl{y}}\rightarrow\M_{X,\ovl{x}}$ is integral (cf.\ Lemma \ref{lem-im} (6)).
We say $f$ is {\em integral} if it is integral at all points of $X$.
\end{dfn}

By Lemma \ref{lem-im} (2),  integral morphisms are stable under base change in the category of fine log schemes.
It is, however, not true that integral morphisms are stable under base change in the category of fs log schemes, cf.\ \cite[I.4.6.5]{AG2}.
So it is often convenient to refer to a base change (or a fiber product) in the category of fs log schemes as {\em fs base change} for emphasis.

Let us finally mention some technical facts on charts.
\begin{lem}\label{lem-goodcharts}
Let $f\colon X\rightarrow Y$ be a morphism of fs log schemes.
Then, for any $x\in X$ and $y=f(x)$, there exists commutative diagram 
$$
\xymatrix{X\ar[d]_f&U\ar[l]\ar[r]\ar[d]_g&\LSp(P)\ar[d]\\ Y&V\ar[l]\ar[r]&\LSp(Q)}
$$
comprised of an \'etale neighborhood $V\rightarrow Y$ of $\ovl{y}$, an fppf neighborhood $U\rightarrow X$ of $\ovl{x}$, and a chart $Q\rightarrow P$ of $g\colon U\rightarrow V$ such that the following conditions are satisfied:
\begin{itemize}
\item[(a)] $P$ and $Q$ are fs monoids;
\item[(b)] $Q\cong\ovl{\M}_{Y,\ovl{y}}$, and $\ovl{P}\cong\ovl{\M}_{X,\ovl{x}}$;
\item[(c)] $Q^{\gp}\rightarrow P^{\gp}$ is injective and $P^{\gp}/Q^{\gp}\cong\M^{\gp}_{X/Y,\ovl{x}}$.
\end{itemize}
$($Note that, in this situation, $Q$ is sharp, and $Q\rightarrow P$ is local.$)$
\end{lem}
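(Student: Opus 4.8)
The plan is to construct the charts on the stalks at $\ovl{x}$ and $\ovl{y}$ and then spread them out; since the germ of a chart at a point extends to an honest chart over an \'etale neighborhood, the neighborhood $V\to Y$ can be taken \'etale, and the extra room of fppf neighborhoods of $\ovl{x}$ is needed only for one auxiliary base change on $X$. So it suffices to produce a homomorphism $Q\to P$ of fs monoids, with $Q$ sharp, together with compatible germs of charts $Q\to\M_{Y,\ovl{y}}$, $P\to\M_{X,\ovl{x}}$ realizing (b), (c) on stalks; then (a) is built in, and the local/sharp assertions are automatic, the latter because the characteristic homomorphism $\ovl{\M}_{Y,\ovl{y}}\to\ovl{\M}_{X,\ovl{x}}$ induced by $f$ is local.

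For the base, note that $\ovl{\M}_{Y,\ovl{y}}$ is an fs sharp monoid, so its associated group is free of finite rank; hence the extension $1\to\OO^{\times}_{Y,\ovl{y}}\to\M^{\gp}_{Y,\ovl{y}}\to\ovl{\M}^{\gp}_{Y,\ovl{y}}\to 0$ splits. Because $\M_{Y,\ovl{y}}$ is exactly the preimage of $\ovl{\M}_{Y,\ovl{y}}$ in $\M^{\gp}_{Y,\ovl{y}}$, the restriction of a splitting is a homomorphism $Q:=\ovl{\M}_{Y,\ovl{y}}\to\M_{Y,\ovl{y}}$ which is the germ of a chart inducing the identity on characteristics. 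This $Q$ is fs and sharp; composing with the structural map of $f$ gives the germ $\beta\colon Q\to\M_{X,\ovl{x}}$ of a chart of the pulled-back log structure on $X$.

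The heart of the matter is building $P$. In the same way choose the germ of a neat chart $\ovl{\M}_{X,\ovl{x}}\to\M_{X,\ovl{x}}$ for $X$ at $\ovl{x}$. The map $\beta$ need not factor through it: $\beta$ and the composite $Q\to\ovl{\M}_{X,\ovl{x}}\to\M_{X,\ovl{x}}$ have the same image in $\ovl{\M}_{X,\ovl{x}}$ (the characteristic of $f$), so they differ by a homomorphism $Q^{\gp}\to\OO^{\times}_{X,\ovl{x}}$; and in addition the relative characteristic group $\M^{\gp}_{X/Y,\ovl{x}}$ can carry torsion that is not represented by any relation inside $\M^{\gp}_{X,\ovl{x}}$. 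Both obstructions are removed by passing to an fppf neighborhood $U\to X$ of $\ovl{x}$ obtained by adjoining finitely many roots of suitable units of $\OO_{X,\ovl{x}}$ (this adjunction is purely inseparable in general, which is exactly why \'etale neighborhoods do not suffice). Over $U$ one can then choose lifts to $\M_{X,\ovl{x}}$ of a generating set of $\ovl{\M}_{X/Y,\ovl{x}}$ that are compatible with $\beta$, and assemble $P$ from $Q$ and $\ovl{\M}_{X,\ovl{x}}$ along this data: concretely, $P$ is the saturation of a finitely generated monoid whose associated group sits in an exact sequence $0\to Q^{\gp}\to P^{\gp}\to\M^{\gp}_{X/Y,\ovl{x}}\to 0$, equipped with an induced germ of a chart $P\to\M_{X,\ovl{x}}$ of $g$ extending $\beta$. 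That $P$ is then fs, that $\ovl{P}\cong\ovl{\M}_{X,\ovl{x}}$, and that the displayed sequence yields (c), is bookkeeping.

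I expect the genuine obstacle to be precisely this reconciliation of the pulled-back base chart with the intrinsic characteristic of $X$: it is what forces the use of fppf—rather than \'etale—neighborhoods of $\ovl{x}$, and the delicate point is to perform the root-adjunction and the gluing so that (a), (b) and (c) hold simultaneously. The remaining ingredients—the split-ness of the extension over $Y$, the passage from chart germs to charts over a neighborhood, and the verification of the local and sharpness statements—are standard.
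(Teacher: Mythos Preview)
Your proposal is correct and follows essentially the same approach as the paper, which simply cites Ogus \cite[III.1.2.7 and II.2.4.4]{AG2} for the construction of the neat chart; your sketch is precisely an unpacking of what those references contain, correctly identifying the splitting argument for $Q$ and the role of torsion in $\M^{\gp}_{X/Y,\ovl{x}}$ as the reason fppf neighborhoods are required on the $X$ side.
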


\begin{proof}
Since $Y$ is fs, one can take on an \'etale neighborhood of $\ovl{y}$ a chart subordinate to the fs monoid $Q=\ovl{\M}_{Y,\ovl{y}}$.
Then one can construct a local chart of $f$ as above according to the recipe as in \cite[III.1.2.7]{AG2}, where one can take $P$ to be an fs monoid by the construction as in \cite[II.2.4.4]{AG2}
\end{proof}

\begin{lem}\label{lem-emim2}
Let $f\colon X\rightarrow Y$ be a morphism of fine log schemes.

$(1)$ Suppose $f$ has a $($global$)$ chart by a homomorphism $h\colon Q\rightarrow P$ of fine monoids, such that the conditions {\rm (b)} and {\rm (c)} in Lemma {\rm \ref{lem-goodcharts}} is satisfied.
Then $f$ is integral if and only if the homomorphism $h$ is integral.

$(2)$ If $f$ is integral at $x$, then it is integral at all points in an open neighborhood of $x$.
\end{lem}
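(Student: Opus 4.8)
The plan is to prove (1) directly by monoid theory and then to deduce (2) from it. The common input for both parts is the standard description of the characteristic stalks attached to a global chart $h\colon Q\to P$: for a point $x'$ of the source lying over $y'$ one has $\ovl{\M}_{X,\ovl{x'}}\cong P/G'$ and $\ovl{\M}_{Y,\ovl{y'}}\cong Q/F'$, where $G'\subseteq P$ (resp.\ $F'\subseteq Q$) is the face consisting of elements whose image in $\OO_{X,\ovl{x'}}$ (resp.\ $\OO_{Y,\ovl{y'}}$) is a unit; since the homomorphism $\OO_{Y,\ovl{y'}}\to\OO_{X,\ovl{x'}}$ on strict henselizations is local, one has $F'=h^{-1}(G')$, and under these identifications the stalk map $\ovl{\M}_{Y,\ovl{y'}}\to\ovl{\M}_{X,\ovl{x'}}$ is the homomorphism $Q/F'\to P/G'$ induced by $h$.

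For (1), the ``only if'' direction is immediate: if $f$ is integral it is integral at $x$, and by condition (b) the stalk map there is $\ovl{h}\colon Q=\ovl{Q}\to\ovl{P}$ (note $Q$ is sharp, being a characteristic stalk of $\ovl{\M}_Y$), so $\ovl{h}$ is integral and hence $h$ is integral by Lemma~\ref{lem-im}(5). For the ``if'' direction I would first isolate the monoid statement: \emph{if $h\colon Q\to P$ is integral and $G\subseteq P$ is a face, then the induced homomorphism $Q/h^{-1}(G)\to P/G$ is integral.} This is a two-line consequence of Lemma~\ref{lem-im}: the composite $Q\xrightarrow{h}P\to P/G$ is integral, being $h$ followed by the integral quotient map of Lemma~\ref{lem-im}(3); it factors through the surjection $Q\to Q/h^{-1}(G)$, so $Q/h^{-1}(G)\to P/G$ is integral by the surjective-cancellation clause of Lemma~\ref{lem-im}(1). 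Applying this with $G=G'$ at an arbitrary $x'\in X$ and invoking the stalk description above shows that $f$ is integral at every $x'$, hence integral.

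For (2), by the neat-chart construction used in Lemma~\ref{lem-goodcharts} (which requires only the fine, not the fs, case of \cite[III.1.2.7]{AG2}) choose an fppf neighborhood $U\to X$ of $\ovl{x}$, an \'etale neighborhood $V\to Y$ of $\ovl{y}$, and a chart $h\colon Q\to P$ of $g\colon U\to V$ satisfying (b) and (c) at the point of $U$ above $x$. Since $U\to X$ and $V\to Y$ are strict, $g$ is integral at that point if and only if $f$ is integral at $x$; hence $\ovl{h}$ is integral by (b), so $h$ is integral by Lemma~\ref{lem-im}(5), so $g$ is integral by part (1). As $U\to X$ is flat and locally of finite presentation, its image is an open neighborhood of $x$, and at every point of it $f$ is integral.

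The main difficulty is one of care rather than depth: setting up the stalk descriptions and, above all, the identity $F'=h^{-1}(G')$ correctly (this is exactly where locality of the maps on strict henselizations is used), and checking that the neat-chart construction of Lemma~\ref{lem-goodcharts} carries over verbatim to fine log schemes and is compatible with the fppf base change $U\to X$. The genuinely new ingredient, the displayed monoid statement, is a purely formal consequence of the cancellation properties collected in Lemma~\ref{lem-im}, so I do not anticipate a real obstacle there.
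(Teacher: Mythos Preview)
The paper does not give a proof at all: it simply cites \cite[III.2.5.2]{AG2}. Your argument is correct and is essentially the standard one that lies behind that reference---the key monoid lemma (that an integral $h\colon Q\to P$ induces an integral map $Q/h^{-1}(G)\to P/G$ for any submonoid $G\subseteq P$) together with the face description of characteristic stalks is exactly how one proves such a statement. So you have supplied a self-contained proof where the paper defers to the literature.

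One small caveat worth recording. In part (2) you invoke the neat-chart construction for \emph{fine} log schemes and ask for condition~(b) literally, i.e.\ $Q\cong\ovl{\M}_{Y,\ovl{y}}$ with $Q$ sharp. For fine but non-fs log schemes a sharp chart at $y$ need not exist: it requires the extension $1\to\OO^{\times}_{Y,\ovl{y}}\to\M^{\gp}_{Y,\ovl{y}}\to\ovl{\M}^{\gp}_{Y,\ovl{y}}\to 1$ to split, which can fail when $\ovl{\M}^{\gp}_{Y,\ovl{y}}$ has torsion. This is not a real obstacle, because your own proof of (1) already shows more than is stated: the ``if'' direction uses nothing about the chart beyond its being a chart, and the ``only if'' direction uses only that the stalk map at $x$ is $\ovl{h}$, i.e.\ only the weaker condition $\ovl{Q}\cong\ovl{\M}_{Y,\ovl{y}}$ and $\ovl{P}\cong\ovl{\M}_{X,\ovl{x}}$. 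Charts satisfying this weaker form of (b) always exist \'etale-locally for fine log schemes (localize any chart at the face of elements mapping to units), so your argument for (2) goes through verbatim once you replace the appeal to (b) by this weaker condition. Since the paper's own proof is a bare citation, this is a refinement of presentation rather than a disagreement with the paper.
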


\begin{proof}
See \cite[III.2.5.2]{AG2}.
\end{proof}

\section{Log blow-ups}\label{sec-lbu}
In this section, we briefly recall the notion of log blow-ups and their basic properties.

Recall first that an {\em ideal} of a monoid $M$ is a subset $K\subset M$ such that $x\in K$ and $a\in M$ imply $ax\in K$.
Trivial ideals are $\emptyset$ and $M$ itself. 
It follows from Dickson's lemma that any ideal of a finitely generated monoid is finitely generated.
If $\pi\colon M\rightarrow\ovl{M}$ is the canonical map, the map $K\mapsto\pi(K)$ gives a bijection from the set of ideals of $M$ to the set of ideals of $\ovl{M}$.

Let $X$ be a log scheme. 
A {\em log ideal} of $X$ is a sheaf of ideals $\mathcal{K}$ of $\M_X$.
We denote by $\ovl{\mathcal{K}}$ the corresponding ideal of $\ovl{\M}_X$.
For a morphism $f\colon X\rightarrow Y$ of fine log schemes and a log ideal $\mathcal{K}$ of $Y$, one has the extension of the log ideal $\mathcal{K}\M_X=(f^{-1}\mathcal{K})\M_X$, which is a log ideal of $X$.

\begin{exa}\label{exa-li}
Let $P$ be a monoid and $K\subset P$ an ideal.
One has the log ideal $\til{K}$ associated to $K$ on $X=\LSp(P)$, constructed as follows.
For any open subset $U\subset X$, we have a monoid homomorphism $P\rightarrow\M_X(U)$, and hence the extension ideal $K\M_X(U)$ of $\M_X(U)$.
Then $\til{K}$ is the sheafification of the subpresheaf of ideals of $\M_X$ given by $U\mapsto K\M_X(U)$.
Note that, for any $x\in X$, we have $\til{K}_{\ovl{x}}=K\M_{X,\ovl{x}}$.
\end{exa}

\begin{dfn}\label{dfn-cli}
A log ideal $\mathcal{K}$ of $X$ is called {\em coherent at $x\in X$} if there exists a local chart $U\rightarrow\LSp(P)$, where $U$ is an \'etale neighborhood around $\ovl{x}$, and an ideal $K\subset P$ such that $\mathcal{K}|_U=\til{K}\M_U$ (let us say, in this situation, that $K$ is a {\em chart} of $\mathcal{K}$ over $U$).
A log ideal is called {\em coherent} if it is coherent at all points.
\end{dfn}

\begin{rem}[{\rm cf.\ \cite[II.2.6.2]{AG2}}]\label{rem-cli}
If a log ideal $\mathcal{K}$ of $X$ is coherent at $x\in X$, then, for any local chart $U\rightarrow\LSp(P)$ around $\ovl{x}$, the pullback ideal $K\subset P$ of $\ovl{\mathcal{K}}_{\ovl{x}}$ by $P\rightarrow\M_{X,\ovl{x}}\rightarrow\ovl{\M}_{X,\ovl{x}}$ generates $\mathcal{K}$ around $\ovl{x}$; i.e., $\til{K}\M_{U'}=\mathcal{K}|_{U'}$ over an \'etale neighborhood $U'$ of $\ovl{x}$ contained in $U$.
\end{rem}

A coherent log ideal $\mathcal{K}$ of a log scheme $X$ is said to be {\em invertible} if, for any $x\in X$, $\ovl{\mathcal{K}}_{\ovl{x}}$ is a principal (i.e.\ generated by a single element) ideal of $\ovl{\M}_{X,\ovl{x}}$, or equivalently, there exist \'etale locally a chart $U\rightarrow\LSp(P)$ and an ideal $K\subset P$ as in Definition \ref{dfn-cli} with $K$ being principal. 

\begin{dfn}\label{dfn-lb}
A morphism $f\colon X'\rightarrow X$ of fine (resp.\ fs) log schemes is called a {\em log blow-up} along a coherent log ideal $\mathcal{K}$ if it has the following universal mapping property:
\begin{itemize}
\item[(a)] $\mathcal{K}\M_{X'}$ is an invertible log ideal of $\M_{X'}$;
\item[(b)] If $g\colon T\rightarrow X$ is a morphism of fine (resp.\ fs) log schemes such that $\mathcal{K}\M_T$ is invertible, then there exists a uniquely morphism $T\rightarrow X'$ such that the diagram 
$$
\xymatrix{T\ar[r]\ar[rd]_g&X'\ar[d]^f\\ &X}
$$
commutes.
\end{itemize}
\end{dfn}

The log blow-up along a coherent log ideal is unique up to isomorphisms.
Since every extension of invertible log ideal is again invertible, we have:
\begin{lem}\label{lem-lbbc}
The family of log blow-ups is stable under base change.
More precisely, if $X_{\mathcal{K}}\rightarrow X$ is a log blow-up of a fine log scheme $X$ along a coherent log ideal $\mathcal{K}$, and if $Y\rightarrow X$ is a morphism of fine log schemes, then $X_{\mathcal{K}}\times_XY\rightarrow Y$ is a log blow-up of $Y$ along $\mathcal{K}\M_Y$.
\end{lem}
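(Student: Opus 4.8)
The plan is to verify directly that the projection $p\colon X_{\mathcal{K}}\times_XY\to Y$ has the universal mapping property of Definition~\ref{dfn-lb} for the log ideal $\mathcal{K}\M_Y$. Two formal facts do all the work: extension of log ideals is transitive, so that along a composite $T\to Y\to X$ of morphisms of fine log schemes one has $(\mathcal{K}\M_Y)\M_T=\mathcal{K}\M_T$; and the extension of an invertible log ideal is again invertible. Before invoking the universal property one must know that the statement is well posed, i.e.\ that $\mathcal{K}\M_Y$ is coherent; this is routine, e.g.\ by choosing, near the relevant point, a chart of the morphism $Y\to X$ (which exists since $Y$ is fine), so that the ideal generated by the image of a chart $K\subset P$ of $\mathcal{K}$ in the target chart is a chart of $\mathcal{K}\M_Y$.

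For condition (a): since $X_{\mathcal{K}}\times_XY\to X_{\mathcal{K}}$ is a morphism of fine log schemes and $\mathcal{K}\M_{X_{\mathcal{K}}}$ is invertible (property (a) of the log blow-up $X_{\mathcal{K}}\to X$), its extension $\mathcal{K}\M_{X_{\mathcal{K}}\times_XY}$ is invertible, and it equals $(\mathcal{K}\M_Y)\M_{X_{\mathcal{K}}\times_XY}$ by transitivity. For condition (b): let $g\colon T\to Y$ be a morphism of fine log schemes with $(\mathcal{K}\M_Y)\M_T$ invertible; by transitivity $\mathcal{K}\M_T$ is then invertible for the composite $T\to Y\to X$. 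The universal property of $X_{\mathcal{K}}\to X$ furnishes a unique $\varphi\colon T\to X_{\mathcal{K}}$ over $X$ lying over $T\to Y\to X$; the pair $(\varphi,g)$ is compatible over $X$ and hence factors uniquely through a morphism $T\to X_{\mathcal{K}}\times_XY$ over $Y$. Since a morphism $T\to X_{\mathcal{K}}\times_XY$ over $Y$ is the same datum as its composite to $X_{\mathcal{K}}$ lying over $T\to X$, uniqueness of the factorization over $Y$ is exactly the uniqueness of $\varphi$. This yields the universal property, hence the lemma; the fs case is verbatim the same, with the fs fiber product in place of the fine one.

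I do not expect any genuine obstacle: the argument is purely formal. The only point demanding a little care is keeping straight the two nested universal properties — that of the log blow-up $X_{\mathcal{K}}\to X$ and that of the fiber product $X_{\mathcal{K}}\times_XY$ — together with the transitivity of ideal extension.
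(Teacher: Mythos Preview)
Your proof is correct and follows exactly the approach the paper indicates: the paper's entire justification is the clause ``since every extension of invertible log ideal is again invertible,'' and you have simply spelled out how this fact, together with transitivity of ideal extension and the universal property of the fiber product, yields the universal property of the log blow-up along $\mathcal{K}\M_Y$. Your treatment is more detailed than the paper's (which omits the routine check of coherence of $\mathcal{K}\M_Y$ and the explicit verification of (a) and (b)), but the argument is the same.
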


If $X$ is an fs log scheme, and $f\colon X'\rightarrow X$ is a log blow-up in the category of fine log schemes, then the saturation $f^{\sat}\colon X^{\prime\sat}\rightarrow X$ gives a log blow-up of $X$ along the same coherent log ideal in the category of fs log schemes.
Hence, to show the existence of log blow-ups, it suffices to deal with the case of fine log schemes.

\begin{rem}\label{rem-normalization}
As indicated above, the log blow-ups in the category of fs log schemes are rather similar to {\em normalized blow-ups}, i.e., blow-up followed by normalization.
To make clear the distinction, we will call log blow-ups taken in the category of fs log schemes {\em fs log blow-ups}.
\end{rem}

The following construction of log blow-ups is due to Kazuya Kato \cite[(1.3.3)]{KK3} (cf.\ \cite[III.2.6]{AG2}):
We first construct the log blow-up 
$$
\BSp_K(P)\longrightarrow\LSp(P)
$$
of $\LSp(P)$, where $P$ is a fine monoid, along the coherent log ideal $\til{K}$ constructed from an ideal $K\subset P$.
Let $I(K)$ be the ideal of $\Z[P]$ generated by $K$, and consider the natural morphism $\Proj\bigoplus_nI(K)^n\rightarrow\Spec\Z[P]$.
$\Proj\bigoplus_nI(K)^n$ has the affine open covering
$$
\Proj\bigoplus_nI(K)^n=\bigcup_{t\in K}\Spec\Z[P\langle t^{-1}K\rangle].
$$
Here, $P\langle E\rangle$ for a subset $E$ of $P^{\gp}$ denotes the smallest fine submonoid in $P^{\gp}$ that contains $P$ and $E$.
The canonical log structures given by $P\langle t^{-1}K\rangle\rightarrow\Z[P\langle t^{-1}K\rangle]$ glue to a fine log structure on $\Proj\bigoplus_nI(K)^n$.
Then it follows that $\BSp_K(P):=\Proj\bigoplus_nI(K)^n\rightarrow\LSp(P)$ gives a log blow-up of $\LSp(P)$ along $\til{K}$.

To give a more explicit local description, take generators $t_0,\ldots,t_r$ of $K$.
Then $\BSp_K(P)$ is the union of the affine log schemes
$$
\Spec\Z\big[P\big\langle\textrm{{\footnotesize $\frac{t_0}{t_i},\ldots,\frac{t_r}{t_i}$}}\big\rangle\big],
$$
with the log structure induced from $P\langle t_0/t_i,\ldots,t_r/t_i\rangle\rightarrow\Z[P\langle t_0/t_i,\ldots,t_r/t_i\rangle]$, i.e., the affine log schemes $\LSp(P\langle t_0/t_i,\ldots,t_r/t_i\rangle)$, for $i=0,\ldots,r$.

Let $X$ be a fine log scheme, and $\mathcal{K}$ a coherent log ideal of $X$.
Suppose there exist a chart $\lambda\colon X\rightarrow\LSp(P)$ modeled on a fine monoid $P$ and an ideal $K\subset P$ such that $\mathcal{K}=\til{K}\M_X$.
Then, by Lemma \ref{lem-lbbc}, 
$$
\BSp_{\mathcal{K}}(X)=X\times_{\LSp(P)}\BSp_K(P)\longrightarrow X
$$
gives a log blow-up of $X$ along $\mathcal{K}$.

In general, we take an \'etale covering $\{U_{\alpha}\}_{\alpha\in L}$ of $X$ such that each $U_{\alpha}$ allow a chart $U_{\alpha}\rightarrow\LSp(P_{\alpha})$ with an ideal $K_{\alpha}\subset P_{\alpha}$ satisfying $\mathcal{K}|_{U_{\alpha}}=\til{K}_{\alpha}\M_{U_{\alpha}}$.
Then, by the universality of log blow-ups, the local log blow-ups $\BSp_{\mathcal{K}_{\alpha}}(U_{\alpha})\rightarrow U_{\alpha}$ constructed as above glue to a log blow-up of $X$ along $\mathcal{K}$.

\begin{exa}\label{exa-tlb}
Let $P$ be a sharp fs monoid, and set $X=\LSp(P)$.
Set $M=P^{\gp}$ and $N=\Hom_{\Z}(M,\Z)$.
The scheme $X$ is an affine toric variety corresponding to the corn $\sigma$ in $N_{\R}$ such that $\sigma^{\vee}\cap M=P$; i.e., $\sigma$ is the dual corn of the corn in $M_{\R}$ generated by $P$.
Let $\phi\colon\sigma\rightarrow\R_{\geq 0}$ be a continuous convex piecewise linear function satisfying the following conditions (cf.\ \cite[p.27]{KKM}):
\begin{itemize}
\item[(a)] $\phi(\lambda x)=\lambda\phi(x)$ for $x\in\sigma$ and $\lambda\in\R_{\geq 0}$;
\item[(b)] $\phi(N\cap\sigma)\subset\Z$.
\end{itemize}
The function $\phi$ induces an ideal $K_{\phi}$ of $P$ given by
$$
K_{\phi}=\{m\in M\mid\langle x,m\rangle\geq\phi(x)\ \textrm{for all}\ x\in\sigma\}.
$$
Then the fs log blow-up of $X$ along the log ideal $\til{K}_{\phi}$ is the normalization of the blow-up of the toric variety $X=X_{\sigma}$ obtained from the coarsest subdividing fan $\Sigma_{\phi}$ of the cone $\sigma$  such that $\phi$ is linear on each cone in $\Sigma_{\phi}$; cf.\ \cite[p.31,\ Theorem 10]{KKM}.
\end{exa}

\section{Proof of the theorem}\label{sec-proof}
\begin{lem}\label{lem-mt}
Let $P,Q$ be sharp fs monoids, and $h\colon Q\hookrightarrow P$ an injective homomorphism.
Consider the induced morphism $f\colon X=\LSp(P)\rightarrow Y=\LSp(Q)$ of fs log schemes.
Then there exists an ideal $K\subset Q$ such that the following conditions are satisfied: if
$$
\xymatrix{X'\ar[d]_{f'}\ar[r]&X\ar[d]^f\\ \BSp_K(Y)\ar[r]&Y}
$$
is the fs base change of $f$ by the fs log blow-up $\BSp_K(Y)\rightarrow Y$, then the underlying scheme-theoretic morphism of $f'$ is equidimensional.
\end{lem}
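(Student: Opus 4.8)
The plan is to rephrase the statement in the language of toric varieties and then perform a combinatorial ``flattening'' on the fan of $Y$ — the method of toric flattening referred to in the introduction. Put $M_Q=Q^{\gp}$, $N_Q=\Hom_{\Z}(M_Q,\Z)$ and let $\sigma_Q\subseteq(N_Q)_{\R}$ be the cone with $\sigma_Q^{\vee}\cap M_Q=Q$; as $Q$ is sharp and fs, $\sigma_Q$ is a full-dimensional strongly convex rational polyhedral cone, $Y=\LSp(Q)$ is the affine toric variety attached to $\sigma_Q$, and similarly $X=\LSp(P)$ corresponds to a cone $\sigma_P\subseteq(N_P)_{\R}$. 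The map $h$ induces $\pi:=(h^{\gp})^{*}\colon N_P\to N_Q$ with $\pi(\sigma_P)\subseteq\sigma_Q$, and $f$ is the associated toric morphism; since $h$ — hence $h^{\gp}$ — is injective, $\pi\otimes\Q$ is surjective, so $\pi(\sigma_P)$ spans $(N_Q)_{\R}$ and $f$ is dominant. By Example \ref{exa-tlb}, for an ideal $K\subseteq Q$ the fs log blow-up $\BSp_K(Y)\to Y$ is the toric variety of a coherent subdivision $\Sigma$ of $\sigma_Q$, and conversely every coherent subdivision of $\sigma_Q$ is obtained from some such $K$; moreover a direct computation of the fs push-outs on affine charts identifies the fs base change $X':=X\times_Y^{\mathrm{fs}}\BSp_K(Y)$ with the toric variety of the fan $\Sigma'$ on $N_P$ whose cones are the $\pi^{-1}(\delta)\cap\sigma_P$ $(\delta\in\Sigma)$ together with all their faces, the morphism $f'$ being the one attached to $\pi\colon(\Sigma',N_P)\to(\Sigma,N_Q)$.

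I would then invoke the standard combinatorial criterion for equidimensionality of a dominant toric morphism (it follows from the orbit description of such morphisms; cf.\ \cite{KKM}), in the form: the underlying morphism of $f'$ is equidimensional as soon as $\pi$ carries every cone of $\Sigma'$ onto a cone of $\Sigma$. Since every cone of $\Sigma'$ has the form $\pi^{-1}(\delta)\cap\tau$ for some $\delta\in\Sigma$ and some face $\tau$ of $\sigma_P$, and $\pi(\pi^{-1}(\delta)\cap\tau)=\delta\cap\pi(\tau)$, the task reduces to producing a coherent subdivision $\Sigma$ of $\sigma_Q$ for which $\delta\cap\pi(\tau)$ is a cone of $\Sigma$ whenever $\delta\in\Sigma$ and $\tau$ is a face of $\sigma_P$.

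The idea here is to subdivide $\sigma_Q$ along the images $\pi(\tau)$ of the (finitely many) faces $\tau$ of the source cone. These are rational polyhedral subcones of $\sigma_Q$; I would collect the facet normals of all of them and of $\sigma_Q$ into a finite set $\ell_1,\dots,\ell_m\in M_Q$ and let $\Sigma$ be the common refinement of $\sigma_Q$ with the hyperplanes $\ell_j^{\perp}$ — equivalently, the restriction to $\sigma_Q$ of the normal fan of the zonotope $\sum_j[0,\ell_j]$. Then each $\pi(\tau)$ is a union of cones of $\Sigma$ (it is cut out of $\sigma_Q$ by some of the $\ell_j^{\perp}$ and the corresponding half-spaces), and $\Sigma$ is coherent, so by Example \ref{exa-tlb} it equals $\BSp_K(Y)$ for a suitable ideal $K\subseteq Q$ (one may take $K=M_Q\cap(m_*+\sum_j[0,\ell_j]+\sigma_Q^{\vee})$ for $m_*$ deep enough in $Q$). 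Finally, to check the criterion, fix $\delta\in\Sigma$ and a face $\tau$ of $\sigma_P$: then $\delta\cap\pi(\tau)$ is convex (an intersection of cones), and, $\pi(\tau)$ being a union of cones of $\Sigma$ and $\delta$ itself a cone of $\Sigma$, it is a union of faces of $\delta$. A union of faces of a polyhedral cone that is itself convex must be a single face — if $F_1,F_2$ occur among those faces and $p_i$ lies in the relative interior of $F_i$, then $\tfrac12(p_1+p_2)$ lies in the relative interior of the join $F_1\vee F_2$ yet also in the convex set, hence in one of the occurring faces, so the family of occurring faces is closed under join and has a largest element, equal to the whole union. Thus $\delta\cap\pi(\tau)$ is a cone of $\Sigma$, and the criterion yields that the underlying morphism of $f'$ is equidimensional.

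The step I expect to be the crux is the combinatorial construction of the third paragraph: realizing that the cones one must subdivide $\sigma_Q$ along are exactly the images $\pi(\tau)$ of the faces of the source cone, and then checking — via the observation that a convex union of faces of a cone is a single face — that this forces $\pi$ to send each cone of the pulled-back fan onto a cone of the base fan. Two supporting points are routine but should be made carefully: that the chosen subdivision can be realized as an fs log blow-up along an honest ideal $K\subseteq Q$ (the zonotope description above does this), and that the fs base change and the equidimensionality statement translate into the fan-theoretic assertions used here.
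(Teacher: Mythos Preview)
Your argument is correct and follows the same overall architecture as the paper's proof: translate to toric geometry, produce a projective subdivision $\Sigma$ of $\sigma_Q$ so that the induced map of fans carries every cone of the pull-back fan onto a cone of $\Sigma$, and then invoke the combinatorial equidimensionality criterion (which the paper cites as \cite[Lemma~4.1]{AK}). Where you genuinely diverge from the paper is in \emph{how} the subdivision is built and \emph{how} the ``cones onto cones'' property is verified.

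The paper (following Kajiwara) works only with the \emph{rays} of $\sigma_P$: for each ray $\rho$ with $\Phi(\rho)\neq 0$ it takes the primitive generator $n_1$ of $\Phi(\rho)$, completes it to a $\Z$-basis of $N_Q$, and pulls back the $\OO(-1)$-support function of the resulting $\P^r$; summing these gives a single strictly convex $\phi$, and $\Sigma'_Q$ is its linearity fan. The upshot is that every $\Phi(\rho)$ is forced to be a ray of $\Sigma'_Q$, from which the paper infers (tersely, via ``hence'') that cones go to cones. Your construction instead subdivides $\sigma_Q$ along the images $\pi(\tau)$ of \emph{all} faces of $\sigma_P$, realized concretely as a hyperplane-arrangement/zonotope fan; you then give a self-contained convexity argument (``a convex union of faces is a single face'') to check directly that each $\delta\cap\pi(\tau)$ is a cone of $\Sigma$. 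Your route is a bit heavier on the subdivision side (more walls), but it makes the verification of the key combinatorial property completely explicit, whereas the paper's ``rays suffice'' step leans on structure of the pull-back fan that is left to the reader. Both constructions yield an honest ideal $K\subset Q$ via Example~\ref{exa-tlb}, so the conclusions coincide.
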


Note that $X'\rightarrow X$ is isomorphic to the fs log blow-up along $\til{KP}=\til{K}\M_X$, i.e., $X'\cong\BSp_{KP}(X)$.

\begin{proof}
In this proof, we follow the original argument in \cite[3.16]{FK} based on the idea of T.\ Kajiwara, which we note is similar to the argument in \cite[Lemma 4.3]{AK}.

We use the following notation:
\begin{itemize}
\item $M_Q=Q^{\gp}$, $M_P=P^{\gp}$;
\item $N_Q=\Hom_{\Z}(M_Q,\Z)$, $N_P=\Hom_{\Z}(M_P,\Z)$;
\item $\sigma_Q$ $($resp.\ $\sigma_P)$  $=$ the cone in $N_Q$ $($resp.\ $N_P)$ such that $Q=\sigma^{\vee}_Q\cap M_Q$ $($resp.\ $P=\sigma^{\vee}_P\cap M_P)$.
\item $\Sigma_Q$ (resp. $\Sigma_P$) $=$ the fan made up from the faces of the cone $\sigma_Q$ (resp.\ $\sigma_P$).
\end{itemize}
Note that we have a map $\Phi\colon\Sigma_P\rightarrow\Sigma_Q$ of fans that induces the morphism of affine toric schemes $\Spec\Z[P]\rightarrow\Spec\Z[Q]$.

Consider the subset $\Sigma_{P,1}\subset\Sigma_P$ (resp.\ $\Sigma_{Q,1}\subset\Sigma_Q$) of rays, i.e., cones of dimension $1$.
Each $\rho\in\Sigma_{P,1}$ is mapped by $\Phi$ onto a ray in $\Sigma_Q$ or to a point (i.e.\ the origin of $N_Q$).
If $\rho$ is mapped onto a ray, then take the primitive base $n_1\in N_Q$ of $\Phi(\rho)$, and extend it to a $\Z$-base $n_1,\ldots,n_r$ of $N_Q$.
The $r+1$-rays spanned by $n_1,\ldots,n_r,-(n_1+\cdots+n_r)$ defines the projective $r$-space $\P^r_{\Z}$, and hence the ideal $\OO(-1)$ gives rise to a support function, denoted by $\phi_{\rho}$, i.e., a continuous convex piecewise linear function $N_{Q,\R}\rightarrow\R_{\geq 0}$ satisfying the conditions (a) and (b) in Example \ref{exa-tlb}; we denote the restriction of $\phi_{\rho}$ onto $\sigma_Q$ by the same symbol.
If $\Phi(\rho)$ is a point, then set $\phi_{\rho}=0$.
Set
$$
\phi=\sum_{\rho\in\Sigma_{P,1}}\phi_{\rho},
$$
and let $\Sigma'_Q$ be the coarsest fan that subdivides $\Sigma_Q$ such that $\phi$ is linear on each cone in $\Sigma'_Q$. 
(The author learned this way of constructing $\phi$ from T.\ Kajiwara.)

Now, let $K\subset Q$ be an ideal constructed from $\phi$ as in Example \ref{exa-tlb}.
Consider the fs log blow-up $\BSp_K(Y)\rightarrow Y$, and the fs base change $f'\colon X':=X\times_Y\BSp_K(Y)\rightarrow\BSp_K(Y)$ of $f$.
Then, $X'\rightarrow X$ is the log blow-up of $X$ along $\til{K}\M_X$, which is the normalized toric blow-up induced from the piecewise linear function on $N_{P,\R}$ given by the pull-back of $\phi$.
If we denote the corresponding fan of $X\times_Y\BSp_K(Y)$ by $\Sigma'_P$, then the induced map $\Phi'\colon\Sigma'_P\rightarrow\Sigma'_Q$ maps each ray onto either a ray or a point (the origin), and hence mapping each cone onto a cone.
Therefore, the morphism $f'\colon X'\rightarrow\BSp_K(Y)$ is equidimensional by \cite[Lemma 4.1]{AK}.
\end{proof}

\begin{proof}[Proof of Theorem {\rm \ref{thm-main}}]
Let $Q=\ovl{\M}_{Y,\ovl{y}}$, and take an \'etale local chart $Y\leftarrow V\rightarrow\LSp(Q)$ around $\ovl{y}$.
For any $x\in X_V=X\times_YV$, take an fppf local chart $X_V\leftarrow U_x\rightarrow\LSp(P_x)$ around $\ovl{x}$, where $P_x$ is an fs monoid, which extends to a local chart of $f$ as in Lemma \ref{lem-goodcharts}.
Since $f$ is quasi-compact, one can take finitely many $x_1,\ldots,x_n\in X_V$ such that $X_V$ is covered by the union of $U_i:=U_{x_i}$ for $i=1,\ldots,n$.
We set $P_i=P_{x_i}$ for $i=1,\ldots,n$.

For $i=1,\ldots,n$, there exists by Lemma \ref{lem-mt} an ideal $K_i\subset Q$ such that the fs base change $\BSp_{K_iP_i}(P_i)=\LSp(P_i)\times_{\LSp(Q)}\BSp_{K_i}(Q)\rightarrow\BSp_{K_i}(Q)$ by the corresponding log blow-up is equidimensional.
Set $K=K_1\cdots K_n$.
Then $\BSp_{KP_i}(P_i)=\LSp(P_i)\times_{\LSp(Q)}\BSp_K(Q)\rightarrow\BSp_K(Q)$ is equidimensional for any $i=1,\ldots,n$.

One can further perform a toric blow-up of the toric scheme $\BSp_K(Q)$ so that the resulting toric scheme is smooth over $\Z$ (cf.\  \cite[p.32,\ Theorem 11]{KKM}).
Since the composition of fs log blow-ups is again an fs log blow-up (\cite[4.11]{Niz}), we may assume that there exists an ideal $K\subset Q$ such that
\begin{itemize}
\item[(a)] the induced morphism 
$$
\BSp_{KP_i}(P_i)=\LSp(P_i)\times_{\LSp(Q)}\BSp_K(Q)\longrightarrow\BSp_K(Q)\eqno{(\ast)_i}
$$
is equidimensional for each $i=1,\ldots,n$; 
\item[(b)] $\BSp_K(Q)$ is smooth over $\Z$.
\end{itemize}

We claim that $(\ast)_i$ is integral for  each $i=1,\ldots,n$.
Since toric schemes are Cohen-Macaulay, the properties (a) and (b) imply that the underlying scheme-theoretic morphism of $(\ast)_i$ is flat.
Thus, for any cones $\sigma$ from the fan of $\BSp_K(Q)$ and $\tau$ from the fan of $\BSp_{KP_i}(P_i)$ such that $\tau$ is mapped to $\sigma$, the affine portion of $(\ast)_i$
$$
\LSp(P'_i)\longrightarrow\LSp(Q')
$$
where $Q'=\sigma'\cap M_Q$ and $P'_i=\tau'\cap M_P$, is flat, and hence is integral by Lemma \ref{lem-im} (6) and Lemma \ref{lem-emim2} (2).
This means $(\ast)_i$ is integral.

Now, by Lemma \ref{lem-emim2} (2), we deduce that $U_i\rightarrow V$ is integral for any $i=1,\ldots,n$, and hence $X_V\rightarrow V$ is integral.
\end{proof}

Let us finally remark that, the argument of the above proofs shows that, if we start from a {\em toroidal morphism} (in the sense as in \cite[\S1]{AK}) $f\colon X\rightarrow Y$, then, since $f$ is described globally by a morphism of polyhedral complexes $f_{\Delta}\colon\Delta_X\rightarrow\Delta_Y$ of K.\ Kato's fans (cf.\ \cite{KK4}), one can actually do the above argument globally on $Y$; cf.\ \cite[4.4]{AK}.
Since the only question here lies as to whether one can take a global log blow-up of $Y$, one can slightly generalize the situation to $Y$ being log regular but without assuming $f$ to be toroidal.
This situation has been treated in \cite{ATW}, which we include here for the reader's convenience:
\begin{prop}[{\cite[3.6.11]{ATW}}]\label{prop-ATW}
Let $f\colon X\rightarrow Y$ be a quasi-compact morphism of fs log schemes, where $Y$ is log regular.
Then there exists an fs log blow-up $Y':=\BSp_{\mathcal{K}}(Y)\rightarrow Y$ along a coherent log ideal $\mathcal{K}\subset\M_Y$ such that the fs base change $f_{Y'}\colon X_{Y'}\rightarrow Y'$ is integral.
\end{prop}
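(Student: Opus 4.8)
The plan is to run the proof of Theorem~\ref{thm-main} with the fan $\Delta_Y$ of the log regular scheme $Y$ (in the sense of K.~Kato \cite{KK4}) playing the role that the local chart $V\to\LSp(Q)$ of the base plays there. Recall that $\Delta_Y$ is a monoidal space, locally modelled on $\Spec$ of a sharp fs monoid, with a strict morphism $Y\to\Delta_Y$ whose local structure at the point corresponding to $\ovl{y}$ is that of $\Spec(\ovl{\M}_{Y,\ovl{y}})$, i.e.\ of the cone $\sigma_Q$ with $Q=\ovl{\M}_{Y,\ovl{y}}$; moreover coherent log ideals of $Y$ correspond to coherent ideal sheaves on $\Delta_Y$, and fs log blow-ups of $Y$ to proper subdivisions of $\Delta_Y$ (cf.\ \cite{KK4}; see \cite[\S4.4]{AK} for the toroidal version of this dictionary). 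We may assume $Y$ quasi-compact, so that $\Delta_Y$ is a finite fan, the general case following by the usual gluing of coherent log ideals.

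As in the proof of Theorem~\ref{thm-main}, the quasi-compactness of $f$ lets us choose finitely many points $x_1,\dots,x_n\in X$, fppf neighborhoods $U_i\to X$ of $\ovl{x_i}$ covering $X$, and charts $h_i\colon Q_i:=\ovl{\M}_{Y,\ovl{y_i}}\hookrightarrow P_i$ of $f$ near $\ovl{x_i}$, $y_i=f(x_i)$, as furnished by Lemma~\ref{lem-goodcharts}; here $Q_i$ provides a chart of $Y$ near $y_i$ (read off from $\Delta_Y$), $P_i$ is fs, and $h_i$ is local and injective on associated groups. For each $i$, Lemma~\ref{lem-mt} applied to $h_i$ produces an ideal $K_i\subset Q_i$ --- equivalently a subdivision $\Sigma'_{Q_i}$ of the cone $\sigma_{Q_i}$ of $\Delta_Y$ at $y_i$ --- such that the fs base change of $\LSp(P_i)\to\LSp(Q_i)$ along $\BSp_{K_i}(\LSp(Q_i))\to\LSp(Q_i)$ is equidimensional.

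The crucial step is to globalize, i.e.\ to produce one coherent log ideal $\mathcal{K}\subset\M_Y$ whose fs log blow-up $Y':=\BSp_{\mathcal{K}}(Y)$, read on $\Delta_Y$, is a proper subdivision $\Delta'\to\Delta_Y$ that (i) refines $\Sigma'_{Q_i}$ over each $\sigma_{Q_i}$ simultaneously and (ii) is regular, so that $\underline{Y'}$ is smooth over $\Z$. For (i): first refine the finitely many $\Sigma'_{Q_i}$ so that they agree on common faces, hence glue to a subdivision of the subfan $\bigcup_i\sigma_{Q_i}\subset\Delta_Y$, and then extend it to a proper subdivision of all of $\Delta_Y$; for (ii), refine further by the fan-theoretic form of the regularization theorem of \cite[p.~32, Theorem~11]{KKM}. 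Since a composition of fs log blow-ups is again an fs log blow-up by Nizio\l\ \cite[4.11]{Niz}, the resulting composite blow-up has the asserted form $\BSp_{\mathcal{K}}(Y)$. This step is the main obstacle: the outputs of Lemma~\ref{lem-mt} are subdivisions of \emph{distinct} cones $\sigma_{Q_i}$, attached to different points of $Y$, with no single chart of $Y$ available to house them, and it is precisely the log regularity --- through the global fan $\Delta_Y$ of \cite{KK4} --- that converts the ``\'etale-locally on $Y$'' of Theorem~\ref{thm-main} into ``globally on $Y$''; once this is in place, extending finitely many cone subdivisions to a global regular subdivision is standard toric combinatorics. (This also explains why no toroidality hypothesis on $f$ is needed: only the base, not the morphism, must be controlled combinatorially.)

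Finally, one concludes exactly as at the end of the proof of Theorem~\ref{thm-main}. For each $i$ and each affine patch $\LSp(Q')$ of $Y'$ lying over an affine patch of $\LSp(Q_i)$, the induced morphism $\LSp(P'_i)\to\LSp(Q')$ is equidimensional by (i), $\LSp(Q')$ is regular by (ii), and $\LSp(P'_i)$ is Cohen--Macaulay since it is toric; hence this morphism is flat, and therefore integral by Lemma~\ref{lem-im}(6) and Lemma~\ref{lem-emim2}(2). Thus $f_{Y'}$ is integral at the image of each $\ovl{x_i}$, and since the $U_i\times_YY'$ cover $X_{Y'}$, Lemma~\ref{lem-emim2}(2) shows that $f_{Y'}\colon X_{Y'}\to Y'$ is integral.
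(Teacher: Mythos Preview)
Your sketch is correct and follows precisely the approach the paper itself outlines in the paragraph preceding the proposition: the paper does not supply an independent proof of Proposition~\ref{prop-ATW} but rather cites \cite[3.6.11]{ATW} and explains that the proof of Theorem~\ref{thm-main} globalizes via K.~Kato's fan $\Delta_Y$ of the log regular base, the only issue being the construction of a single global log blow-up of $Y$. Your expansion of this remark into a sketch --- choosing finitely many local charts, applying Lemma~\ref{lem-mt} to each, gluing and regularizing the resulting cone subdivisions inside $\Delta_Y$, and concluding via flatness as in the proof of Theorem~\ref{thm-main} --- is exactly what the paper intends.
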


\frenchspacing
\begin{small}

\medskip\noindent
{\sc Department of Mathematics, Tokyo Institute of Technology, 2-12-1 Ookayama, Meguro, Tokyo 152-8551, Japan} (e-mail: {\tt bungen@math.titech.ac.jp})
\end{small}
\end{document}